\documentclass[11pt]{article}
\usepackage{amsmath,amssymb,amsthm}
\usepackage{enumitem}
\usepackage[hidelinks]{hyperref}

\newtheorem{theorem}{Theorem}[section]
\newtheorem{lemma}[theorem]{Lemma}

\newtheorem{conjecture}[theorem]{Conjecture}
\newtheorem{claim}[theorem]{Claim}
\theoremstyle{definition}

\theoremstyle{remark}

\numberwithin{equation}{section}

\title{A counterexample to the Erd\H{o}s--S\'os bipartite-link conjecture}
\author{Tianchi Yang\thanks{School of Mathematics, Georgia Institute of
Technology, Atlanta, GA 30332, US. Email: \texttt{tyang439@gatech.edu}.}}
\date{\today}

\begin{document}

\maketitle

\begin{abstract}
Erd\H{o}s and S\'os conjectured that every $n$-vertex $3$-uniform
hypergraph whose link graphs are all bipartite has at most
$(1/4+o(1))\binom n3$ edges.
We disprove this conjecture by constructing counterexamples
with edge density at least $0.250000356>1/4$
for all sufficiently large $n$.
\end{abstract}
\section{Introduction}
A \emph{3-graph} is a 3-uniform hypergraph.
The \emph{link graph} of a 3-graph $\mathcal H$ at a vertex $v$,
denoted by $\partial_v\mathcal H$, is the graph on
$V(\mathcal H)\setminus\{v\}$ with an edge $xy$
whenever $\{v,x,y\}\in\mathcal H$.
We write $|\mathcal H|$ for the number of edges of $\mathcal H$.

Erd\H{o}s and S\'os conjectured that every $n$-vertex 3-graph
whose link graphs are all bipartite has fewer than $n^3/24$ edges.
The conjecture was recorded by Frankl and F\"uredi~\cite[p.~328]{FF84}.
The following asymptotic version appears
in~\cite[Section~2.9]{MPS11}.

\begin{conjecture}[Erd\H{o}s--S\'os]\label{conj:ES}
Every $n$-vertex $3$-graph $\mathcal H$ whose link graphs are all bipartite satisfies
\[
|\mathcal H|\le\left(\frac14+o(1)\right)\binom n3.
\]
\end{conjecture}

The triples of vertices that form directed triangles in a tournament
define a $3$-graph whose link graphs are all bipartite. In a uniformly random
tournament, each triple forms a directed triangle with probability
$1/4$, giving the conjectured leading constant~\cite{FF84}.
Other constructions attain the same asymptotic density, including
recursive blow-ups of a single $3$-edge; see~\cite{MPS11,FV13}.

Razborov~\cite{Raz10} obtained an upper bound of $0.266$ for the
asymptotic edge density using flag algebras.
Falgas-Ravry and Vaughan~\cite{FV13} improved this bound to $0.258295$.
More recently, Hou, Hou, Liu, Zeng, and Zhang~\cite{HouEtAl26}
studied F\"uredi's extension in which every vertex link is
$t$-partite. They proved that the asymptotic maximum edge density
is at most $1-t^{-1}-1/(12t^2)$ for every $t\ge2$.

We disprove Conjecture~\ref{conj:ES} with the following
family of $3$-graphs.

\begin{theorem}\label{thm:main}
For every positive integer $k$, there is a $3$-graph $\mathcal H_k$
on $n=497k$ vertices such that all its link graphs are bipartite and
\[
|\mathcal H_k| 
=\left(\frac1{24}+\frac{25}{420\,903\,336}\right)n^3
>\left(\frac14+0.000000356\right)\binom n3.
\]
\end{theorem}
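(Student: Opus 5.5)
The plan is to take $\mathcal H_k$ to be a balanced blow-up of a single fixed template $3$-graph $T$ on $497$ vertices: replace each vertex $i$ of $T$ by a class $V_i$ of $k$ vertices, and take as edges exactly the triples $\{x,y,z\}$ with $x\in V_a,y\in V_b,z\in V_c$ for some edge $abc\in T$. No edges lie inside a class or meet a class twice.

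First I check that bipartiteness of links is preserved. For $x\in V_a$, the link $\partial_x\mathcal H_k$ is the blow-up of the graph $\partial_a T$: each vertex $b$ becomes an independent set $V_b$, and every edge $bc$ becomes a complete bipartite graph between $V_b$ and $V_c$. Colouring every vertex of $V_b$ with the colour of $b$ in a $2$-colouring of $\partial_a T$ gives a proper $2$-colouring. So it suffices that every link of $T$ is bipartite.

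Second, the count is exactly $|\mathcal H_k|=|T|k^3$, which is why the statement has no lower-order terms. With $n=497k$, the target requires $|T|=\left(\tfrac1{24}+\tfrac{25}{420\,903\,336}\right)497^3$. Since $420\,903\,336=24\cdot 17\,537\,639$ and $497^3=7\cdot 17\,537\,639$, this equals $(497^3+175)/24=5\,115\,152$. This is an integer and exceeds $\tfrac14\binom{497}{3}$, as it must.

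Third, the final inequality is routine. Since $\binom n3<n^3/6$, it suffices that $\tfrac{25}{420\,903\,336}>0.000000356/6\approx 5.93\times10^{-8}$. The left side is about $5.94\times 10^{-8}$, so this holds.

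The real content, and the main obstacle, is exhibiting $T$: a $3$-graph on $497$ vertices with all links bipartite and exactly $5\,115\,152$ edges, i.e.\ beating the tournament density on a finite vertex set by enough to survive blow-up. (Blow-ups of finite templates are the only way to get exact $n^3$ scaling.) The factorisation $497=7\cdot 71$ suggests building $T$ from structured pieces, so I would try the following.

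\begin{itemize}
\item Start from an algebraic tournament-type object, such as a Paley-like or cyclic tournament on $\mathbb Z_{71}$ combined with a $7$-vertex gadget, and take its cyclic triangles.
\item Each link of a cyclic-triangle $3$-graph is bipartite with parts the in- and out-neighbourhoods.
\item Then add extra triples between the $7$ groups of $71$ whose links still respect a consistent in/out $2$-colouring at every vertex.
\end{itemize}

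A symmetric design (invariant under $\mathbb Z_{71}$ or $\mathbb Z_{497}$) lets link bipartiteness be verified on orbit representatives only, and makes the edge count a sum over orbits. In practice I expect the template to be found by computer search (local optimisation over cyclically invariant triple systems with the bipartite-link constraint). The proof then consists of describing $T$ explicitly and certifying, for each of finitely many orbit-representative vertices, a $2$-colouring of its link together with the orbit-sum edge count $5\,115\,152$.
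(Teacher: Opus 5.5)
\textbf{There is a genuine gap: the construction is missing.} Your reduction is correct and coincides with the last step of the paper. A balanced $k$-fold blow-up keeps links bipartite and multiplies the edge count by $k^3$. Your computation $\bigl(\tfrac1{24}+\tfrac{25}{420\,903\,336}\bigr)497^3=(497^3+175)/24=5\,115\,152$ is right, and the final inequality via $\binom n3<n^3/6$ is the paper's. But you never produce the $497$-vertex template, and that is the entire theorem. A plan to find it by computer search is not a proof.

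The target is also delicate: it beats the regular-tournament count $(497^3-497)/24=5\,115\,124$ by only $28$ edges, and the routes you sketch do not obviously reach it.
\begin{itemize}
\item If every vertex keeps the in/out colouring of one tournament on all $497$ vertices, nothing can be added. A triple whose two other vertices lie on opposite sides at each of its three vertices is forced to be a directed triangle.
\item Suppose instead you take $7$ groups of $71$, regular tournaments inside the groups, and blow up a $7$-vertex gadget with $g$ triples. You get $357\,911\,g+104\,370$ edges, which is never $5\,115\,152$. Indeed $g=14$ reproduces exactly $5\,115\,124$, and $g\ge 15$ needs a $7$-vertex template that already beats tournaments on $7$ vertices.
\item In general, with equal class sizes such a layout gains over tournaments only if the $t$-vertex template itself has more than $(t^3-t)/24$ edges.
\end{itemize}

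\textbf{The missing idea} is an \emph{unequal} blow-up of a small template that on its own is worse than a tournament. The paper takes a $9$-vertex $\mathcal F$ with $24<(9^3-9)/24=30$ edges and an explicit bipartition $(A_i,B_i)$ of each link $\partial_i\mathcal F$. It blows $\mathcal F$ up with class sizes $(21,7,35,162,160,7,21,35,49)$ and puts an extremal tournament-triangle $3$-graph $\mathcal T_{m_i}$ inside each class $V_i$. No edge meets exactly two classes, so the link at $u\in V_i$ is bipartite with sides
\[
N^+_{T_{m_i}}(u)\cup\bigcup_{j\in A_i}V_j
\qquad\text{and}\qquad
N^-_{T_{m_i}}(u)\cup\bigcup_{j\in B_i}V_j .
\]
This colouring uses the in/out split inside the class and the template split outside, so it is not the colouring of any single global tournament. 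The count is $4\,758\,124+357\,028=5\,115\,152$, and the whole gain over tournaments comes from the unequal weights.
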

By adding fewer than $497$ isolated vertices, we obtain counterexamples
for all sufficiently large $n$ with the same asymptotic edge density.

The construction in Section~2 starts with a nine-vertex
$3$-graph $\mathcal F$ whose link graphs are all bipartite
and proceeds as follows:
\[
\mathcal F\ \text{($9$ vertices)}
\longrightarrow
\mathcal H\ \text{($497$ vertices)}
\longrightarrow
\mathcal H_k\ \text{($497k$ vertices)}.
\]
We obtain $\mathcal H$ by taking an unequal blow-up of
$\mathcal F$ and adding the triples forming directed triangles
in a tournament inside each class.
The $3$-graph $\mathcal H_k$ is then the balanced $k$-fold
blow-up of $\mathcal H$.

\section{The construction}

Let $\mathcal F$ be the $3$-graph on $\{0,\ldots,8\}$ with the following $24$
edges, where $abc$ denotes $\{a,b,c\}$:
\begin{equation}\label{eq:template}
\begin{split}
\mathcal F=\{&014,017,026,028,034,058,067,\\
    &123,128,135,136,137,148,158,168,\\
    &234,256,278,345,346,347,348,578,678\}.
\end{split}
\end{equation}

\begin{lemma}\label{clm:template}
Every vertex link of $\mathcal F$ is bipartite.
\end{lemma}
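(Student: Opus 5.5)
The plan is direct verification. A graph is bipartite exactly when its vertex set splits into two classes with every edge crossing between them. So for each $v\in\{0,\dots,8\}$ we list the link $\partial_v\mathcal F$ from \eqref{eq:template} and exhibit such a bipartition. Vertices of a link that lie on no edge of it may be put in either class; only the classes $A_v,B_v$ of non-isolated vertices are recorded.

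\emph{Reading off the links.} For each $v$ we collect the pairs $xy$ with $vxy\in\mathcal F$:
\begin{align*}
\partial_0\mathcal F&=\{14,17,26,28,34,58,67\}, & \partial_1\mathcal F&=\{04,07,23,28,35,36,37,48,58,68\},\\
\partial_2\mathcal F&=\{06,08,13,18,34,56,78\}, & \partial_3\mathcal F&=\{04,12,15,16,17,24,45,46,47,48\},\\
\partial_4\mathcal F&=\{01,03,18,23,35,36,37,38\}, & \partial_5\mathcal F&=\{08,13,18,26,34,78\},\\
\partial_6\mathcal F&=\{02,07,13,18,25,34,78\}, & \partial_7\mathcal F&=\{01,06,13,28,34,58,68\},\\
\partial_8\mathcal F&=\{02,05,12,14,15,16,27,34,57,67\}.
\end{align*}
As a consistency check, the link sizes sum to $72=3\cdot 24$.

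\emph{Bipartitions.} We claim the following classes work:
\begin{align*}
&A_0=\{1,3,6,8\},\ B_0=\{2,4,5,7\}; && A_1=\{0,3,8\},\ B_1=\{2,4,5,6,7\};\\
&A_2=\{0,1,4,5,7\},\ B_2=\{3,6,8\}; && A_3=\{1,4\},\ B_3=\{0,2,5,6,7,8\};\\
&A_4=\{1,3\},\ B_4=\{0,2,5,6,7,8\}; && A_5=\{0,1,2,4,7\},\ B_5=\{3,6,8\};\\
&A_6=\{0,3,5,8\},\ B_6=\{1,2,4,7\}; && A_7=\{0,3,8\},\ B_7=\{1,2,4,5,6\};\\
&A_8=\{0,1,3,7\},\ B_8=\{2,4,5,6\}.
\end{align*}
These are found by $2$-colouring each component of the link greedily along a spanning tree. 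For example, $\partial_6\mathcal F$ has components $0\!-\!2\!-\!5$ and $0\!-\!7\!-\!8\!-\!1\!-\!3\!-\!4$, which share vertex $0$. Colouring outward from $0$ gives $\{0,3,5,8\}$ versus $\{1,2,4,7\}$.

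It then remains to confirm, edge by edge, that every pair in $\partial_v\mathcal F$ has one endpoint in $A_v$ and one in $B_v$. This is $72$ routine membership checks. For instance, in $\partial_8\mathcal F$ each of $02,05,14,16,12,15,27,34,57,67$ meets both $\{0,1,3,7\}$ and $\{2,4,5,6\}$.

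The only real obstacle is bookkeeping. One must extract the links correctly from the $24$ triples and ensure no monochromatic edge slips through. Where a greedy colouring closes an odd cycle, the fix is to recolour starting from a different root; in fact no odd cycle arises, so the displayed partitions certify that every link of $\mathcal F$ is bipartite.
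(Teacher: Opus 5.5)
Your proof is correct and is essentially the paper's: you exhibit the same bipartitions (up to swapping the two sides for $v=3,4$) and check every link edge by edge, and your extracted links and all $72$ membership checks are accurate. Two minor slips that do not affect the argument: the remark about recolouring from a different root is misguided, since a monochromatic non-tree edge in a spanning-tree $2$-colouring already certifies an odd cycle and re-rooting cannot help; and $\partial_6\mathcal F$ is the single path $5,2,0,7,8,1,3,4$, not two components.
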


\begin{proof}
For each $i\in V(\mathcal F)$, choose $A_i$ as follows and define
$B_i=V(\mathcal F)\setminus(\{i\}\cup A_i)$:
\[
\begin{array}{lll}
A_0=\{1,3,6,8\}, & A_1=\{0,3,8\}, & A_2=\{0,1,4,5,7\},\\[2pt]
A_3=\{0,2,5,6,7,8\}, & A_4=\{0,2,5,6,7,8\}, & A_5=\{0,1,2,4,7\},\\[2pt]
A_6=\{0,3,5,8\}, & A_7=\{0,3,8\}, & A_8=\{0,1,3,7\}.
\end{array}
\]
It follows directly from~\eqref{eq:template} that $A_i$ and $B_i$
form a bipartition of $\partial_i\mathcal F$.
\end{proof}

For a tournament $T_m$, let $\mathcal T_m$ be the $3$-graph
on $V(T_m)$ whose edges are the vertex triples of its directed
triangles. 
Each link $\partial_u\mathcal T_m$ is bipartite, with parts
$N^+_{T_m}(u)$ and $N^-_{T_m}(u)$,  the sets of  out-neighbours and
in-neighbours of $u$ in $T_m$, respectively.
The following edge counts are classical; see
Moon~\cite[Theorem~4 and its first corollary, pp.~10--11]{Moon68}.

\begin{claim}\label{clm:tournaments}
For every positive integer $m$, there is a tournament $T_m$
on $m$ vertices such that the associated $3$-graph $\mathcal T_m$ has
bipartite vertex links and
\[
|\mathcal T_m|=
\begin{cases}
(m^3-m)/24,&m\text{ odd},\\
(m^3-4m)/24,&m\text{ even}.
\end{cases}
\]
\end{claim}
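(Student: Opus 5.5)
The plan is to count directed triangles through score sequences and then choose a tournament whose scores are as equal as possible. In any tournament $T_m$, a triple that is not a directed triangle is transitive, so it has exactly one vertex beating the other two. Counting transitive triples by that source vertex gives $\sum_u\binom{d^+(u)}{2}$, where $d^+(u)=|N^+_{T_m}(u)|$. Hence
\[
|\mathcal T_m|=\binom m3-\sum_{u}\binom{d^+(u)}{2},\qquad \sum_u d^+(u)=\binom m2 .
\]
The bipartiteness of the links is already noted before the claim. Every edge of $\partial_u\mathcal T_m$ comes from a directed triangle $u\to x\to y\to u$, so it joins the out-neighbour $x$ to the in-neighbour $y$. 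It therefore only remains to choose the scores.

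For odd $m=2r+1$, I will use the rotational tournament on $\mathbb Z_m$ with $i\to i+j$ for $1\le j\le r$. This is well defined because exactly one of $j$ and $-j$ lies in $\{1,\dots,r\}$ for each $j\not\equiv 0$. Every score equals $r$, so
\[
|\mathcal T_m|=\binom{m}{3}-m\binom{r}{2}.
\]
Substituting $r=(m-1)/2$ should simplify to $(m^3-m)/24$.

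For even $m=2r$, I will take the rotational tournament $T_{m-1}$ on $2r-1$ vertices (all scores $r-1$) and add a new vertex $w$. I let $w$ beat a set $S$ of $r$ old vertices and lose to the remaining $r-1$. The old vertices in $S$ keep score $r-1$, and the others rise to score $r$. So $r$ vertices have score $r-1$ (those in $S$ together with $w$) and $r$ vertices have score $r$. This is the most balanced score sequence possible, since the average score is $r-\tfrac12$. Then
\[
|\mathcal T_m|=\binom{2r}{3}-r\binom{r-1}{2}-r\binom r2 ,
\]
which should reduce to $(m^3-4m)/24$. Alternatively, one can simply cite Moon~\cite{Moon68}, as the paper does.

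The main obstacle is only arithmetic: I must verify the two polynomial identities and check that the score sequences I claim are actually realised. Both parts are routine. Convexity of $\binom{x}{2}$ also shows these counts are maximal, though the claim only needs existence.
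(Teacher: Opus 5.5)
Your proof is correct and is essentially the classical argument behind the paper's citation of Moon: the identity $|\mathcal T_m|=\binom{m}{3}-\sum_u\binom{d^+(u)}{2}$, combined with a regular tournament for odd $m$ and a near-regular one for even $m$ (the paper itself gives no proof of its own), and both of your polynomial identities check out. One small slip in the even case: $w$ beats the $r$ vertices of $S$, so $w$ has score $r$ and the score-$(r-1)$ vertices are exactly those of $S$ (not $S$ together with $w$); this does not change the score multiset ($r$ copies each of $r-1$ and $r$) or your count $r(r-1)(r+1)/3=(m^3-4m)/24$.
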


Take nine disjoint classes $V_0,\ldots,V_8$ of sizes $m_0,\ldots,m_8$,
where
\[
(m_0,\ldots,m_8)=(21,7,35,162,160,7,21,35,49).
\]
For each $i$, place a copy of the tournament $T_{m_i}$ from
Claim~\ref{clm:tournaments} on $V_i$, and let $\mathcal T_{m_i}$ be the
$3$-graph formed by its directed triangles. Define $\mathcal H$ on
$V_0\cup\cdots\cup V_8$ by including exactly the following edges:
\begin{enumerate}
\item all triples with one vertex in each of $V_i,V_j,V_\ell$ whenever
      $\{i,j,\ell\}\in\mathcal F$;
\item all edges of $\mathcal T_{m_i}$ inside $V_i$, for each $i$.
\end{enumerate}
Thus $\mathcal H[V_i]=\mathcal T_{m_i}$ for each $i$, and no edge of
$\mathcal H$ meets exactly two classes.

\begin{lemma}\label{clm:finite}
The $3$-graph $\mathcal H$ has $497$ vertices, all its vertex links are bipartite,
and
\[
|\mathcal H|=5\,115\,152=\left(\frac1{24}+\frac{25}{420\,903\,336}\right)497^3.
\]
\end{lemma}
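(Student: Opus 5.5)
The plan is to check the three assertions separately: the vertex count, the bipartiteness of every link, and the edge count. The vertex count is just the sum of the class sizes: $21+7+35+162+160+7+21+35+49=497$.

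For bipartiteness, fix a vertex $v\in V_i$ and describe $\partial_v\mathcal H$ exactly. Because no edge of $\mathcal H$ meets exactly two classes, a pair $xy$ lies in the link only in two situations. Either $x,y\in V_i$ and $\{v,x,y\}\in\mathcal T_{m_i}$, or $x\in V_j$, $y\in V_\ell$ with $i,j,\ell$ distinct and $\{i,j,\ell\}\in\mathcal F$, which means $j\ell\in\partial_i\mathcal F$. In particular, a pair with one vertex in $V_i$ and one outside, or with both vertices in a single class $V_j$ for $j\ne i$, is never in the link. Using the sets $A_i,B_i$ from the proof of Lemma~\ref{clm:template} and the out- and in-neighbourhoods in $T_{m_i}$ from Claim~\ref{clm:tournaments}, I would take the two parts to be
\[
X=N^+_{T_{m_i}}(v)\cup\bigcup_{j\in A_i}V_j,\qquad
Y=N^-_{T_{m_i}}(v)\cup\bigcup_{j\in B_i}V_j .
\]
Every link edge of the first kind joins $N^+(v)$ to $N^-(v)$, by Claim~\ref{clm:tournaments}. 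Every link edge of the second kind joins $V_j$ to $V_\ell$ with $j\ell$ an edge of $\partial_i\mathcal F$, and such a pair crosses the bipartition $(A_i,B_i)$ by Lemma~\ref{clm:template}. Hence $(X,Y)$ is a bipartition of $\partial_v\mathcal H$.

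For the edge count, I would write
\[
|\mathcal H|=\sum_{\{i,j,\ell\}\in\mathcal F}m_im_jm_\ell+\sum_{i=0}^8|\mathcal T_{m_i}|.
\]
The second sum is evaluated with Claim~\ref{clm:tournaments}. The classes of sizes $162$ and $160$ are the only even ones, so they contribute $(m^3-4m)/24$, and every other class contributes $(m^3-m)/24$. The first sum runs over the $24$ triples listed in~\eqref{eq:template}. I would organize it by grouping the triples containing the large classes $3$ and $4$, for instance $345,346,347,348$, and factoring out $m_3$ or $m_4$. This should confirm the total $5\,115\,152$. Finally, I would verify the identity $5\,115\,152=\bigl(\tfrac1{24}+\tfrac{25}{420\,903\,336}\bigr)497^3$ using $497^3=122\,763\,473$. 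Multiplying through by $420\,903\,336$ turns it into an integer identity, which can be checked directly; it may help to factor $420\,903\,336$ and cancel powers of $7$, since $497=7\cdot71$.

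The conceptual step is the bipartiteness argument, and it is short once the observation about edges meeting exactly two classes is made. The main obstacle is the error-free arithmetic of the $24$ product terms and the final rational identity. There is no idea involved there, only careful bookkeeping.
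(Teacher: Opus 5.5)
Your proposal is correct and follows the paper's proof essentially step for step. You use the same bipartition $N^+_{T_{m_i}}(v)\cup\bigcup_{j\in A_i}V_j$ versus $N^-_{T_{m_i}}(v)\cup\bigcup_{j\in B_i}V_j$, and the same split of $|\mathcal H|$ into the blow-up sum $4\,758\,124$ plus the tournament sum $357\,028$; the deferred arithmetic checks out, since $420\,903\,336=24\cdot7^2\cdot71^3$ reduces the final identity to $5\,115\,152-497^3/24=175/24$.
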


\begin{proof}
The class sizes give
\[
|V(\mathcal H)|=\sum_{i=0}^8m_i
=2\cdot21+2\cdot7+2\cdot35+162+160+49=497.
\] 
By construction and Claim~\ref{clm:tournaments},
\[
|\mathcal H|=\sum_{\{i,j,\ell\}\in \mathcal F}m_i m_j m_\ell
+\sum_{i=0}^8|\mathcal T_{m_i}|
=4\,758\,124+357\,028=5\,115\,152.
\]
The two sums are evaluated in Appendix~\ref{app:counts}.

Finally, fix $u\in V_i$, and let $A_i,B_i$ be the
bipartition of $\partial_i\mathcal F$ from Lemma~\ref{clm:template}.
We claim that
\[
N^+_{T_{m_i}}(u)\cup\bigcup_{j\in A_i}V_j,
\qquad
N^-_{T_{m_i}}(u)\cup\bigcup_{j\in B_i}V_j
\]
form a bipartition of $\partial_u\mathcal H$.
These sets partition $V(\mathcal H)\setminus\{u\}$.
Every hyperedge $\{u,v,w\}\in\mathcal H$ is of one of
the following two types:
\[
\text{(i)}\quad \{u,v,w\}\in\mathcal T_{m_i},
\qquad
\text{(ii)}\quad v\in V_j,\ w\in V_\ell,\ \{i,j,\ell\}\in\mathcal F.
\]
In case (i), $v,w$ belong to opposite sides
$N^+_{T_{m_i}}(u)$ and $N^-_{T_{m_i}}(u)$.
In case (ii), $j,\ell$ belong to opposite sides
$A_i$ and $B_i$.
Thus $\partial_u\mathcal H$ is bipartite.
\end{proof}

For a positive integer $k$, write $[k]=\{1,\ldots,k\}$.
Define $\mathcal H_k$ on $V(\mathcal H)\times[k]$ by
\[
\{(u,a),(v,b),(w,c)\}\in\mathcal H_k
\quad\Longleftrightarrow\quad
\{u,v,w\}\in\mathcal H,
\]
for $u,v,w\in V(\mathcal H)$ and $a,b,c\in[k]$.
Thus $\mathcal H_k$ is the balanced $k$-fold blow-up of $\mathcal H$.

\begin{lemma}\label{clm:blowup}
For every positive integer $k$, the $3$-graph $\mathcal H_k$ has $497k$ vertices
and bipartite vertex links, and $|\mathcal H_k|=k^3|\mathcal H|$.
\end{lemma}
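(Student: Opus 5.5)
The vertex count is immediate, since $|V(\mathcal H)\times[k]|=497k$ by Lemma~\ref{clm:finite}. For the edge count, the defining rule says exactly that the edges of $\mathcal H_k$ are the triples $\{(u,a),(v,b),(w,c)\}$ whose projection $\{u,v,w\}$ is an edge of $\mathcal H$. Note that such a projection automatically consists of three distinct vertices of $V(\mathcal H)$, so no degenerate triples arise. The map sending an edge of $\mathcal H_k$ to its projection is therefore well defined. For a fixed edge $\{u,v,w\}\in\mathcal H$, each fibre is parametrised bijectively by the choice of labels $(a,b,c)\in[k]^3$. Hence $|\mathcal H_k|=k^3|\mathcal H|$.

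For the links, I will lift the bipartitions from Lemma~\ref{clm:finite}. Fix a vertex $(u,a)$. Let $X_u,Y_u$ be the bipartition of $\partial_u\mathcal H$ on $V(\mathcal H)\setminus\{u\}$. I would take the two sides to be
\[
X=(X_u\times[k])\cup\{(u,b):b\ne a\}, \qquad Y=Y_u\times[k].
\]
Together these partition $V(\mathcal H_k)\setminus\{(u,a)\}$. Where the extra vertices $(u,b)$ go does not matter, because $(u,a)$ and $(u,b)$ never lie in a common edge: their projections coincide, so no edge's projection could have three distinct elements. Thus those extra vertices are isolated in $\partial_{(u,a)}\mathcal H_k$.

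Now take an edge $\{(u,a),(v,b),(w,c)\}$. Its projection $\{u,v,w\}$ lies in $\mathcal H$ with $v,w\ne u$ distinct, so $vw$ is an edge of $\partial_u\mathcal H$. Hence $v$ and $w$ lie on opposite sides of $X_u,Y_u$, and so $(v,b)$ and $(w,c)$ lie on opposite sides of $X,Y$. This shows that $\partial_{(u,a)}\mathcal H_k$ is bipartite.

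There is no real obstacle here. The only point needing care is the observation that two copies of the same vertex never share an edge, which is what makes the lifted bipartition valid.
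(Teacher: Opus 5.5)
Your proof is correct and follows the same approach as the paper. The edge count comes from the $k^3$ labellings of each edge of $\mathcal H$. For the links, the paper notes that the link of $(u,a)$ is the $k$-fold blow-up of $\partial_u\mathcal H$ together with the $k-1$ isolated copies $(u,b)$, $b\ne a$; you make this one-line argument explicit by writing down the lifted bipartition.
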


\begin{proof}
By definition, $|V(\mathcal H_k)|=497k$ and
$|\mathcal H_k|=k^3|\mathcal H|$.
For each $(v,a)\in V(\mathcal H_k)$, the link
$\partial_{(v,a)}\mathcal H_k$ is a balanced $k$-fold blow-up
of $\partial_v\mathcal H$, together with $k-1$ isolated vertices.
It is therefore bipartite.
\end{proof}

\begin{proof}[Proof of Theorem~\ref{thm:main}]
Fix a positive integer $k$ and let $n=497k$. Lemmas~\ref{clm:finite}
and~\ref{clm:blowup} give an $n$-vertex $3$-graph $\mathcal H_k$ with bipartite
vertex links and
\[
|\mathcal H_k|
=k^3|\mathcal H|
=\left(\frac1{24}+\frac{25}{420\,903\,336}\right)n^3.
\]
Since $n^3>6\binom n3$, we obtain
\[
|\mathcal H_k|
>\left(\frac14+\frac{6\cdot25}{420\,903\,336}\right)\binom n3
>\left(\frac14+0.000000356\right)\binom n3.
\]
The asymptotic edge density is therefore strictly greater
than $1/4$, disproving Conjecture~\ref{conj:ES}.
\end{proof}

\section*{Declaration of AI use}

The author used GPT-6.1 to assist with exploring
proof strategies, checking proofs, and improving the exposition.
All mathematical arguments, results, and conclusions were
reviewed and verified by the author, who takes full responsibility
for the content of this paper.

\clearpage
\appendix
\section{Edge counts}\label{app:counts}

The class sizes are
$(m_0,\ldots,m_8)=(21,7,35,162,160,7,21,35,49)$.
The $24$ template edges in~\eqref{eq:template} contribute
\begin{align*}
\sum_{\{i,j,\ell\}\in\mathcal F}m_i m_j m_\ell
&=m_0m_1m_4+m_0m_1m_7+m_0m_2m_6+m_0m_2m_8+m_0m_3m_4+m_0m_5m_8\\
&\quad+m_0m_6m_7+m_1m_2m_3+m_1m_2m_8+m_1m_3m_5+m_1m_3m_6+m_1m_3m_7\\
&\quad+m_1m_4m_8+m_1m_5m_8+m_1m_6m_8+m_2m_3m_4+m_2m_5m_6+m_2m_7m_8\\
&\quad+m_3m_4m_5+m_3m_4m_6+m_3m_4m_7+m_3m_4m_8+m_5m_7m_8+m_6m_7m_8.
\end{align*}
Substituting the class sizes and evaluating all $24$ products gives
\[
\begin{aligned}
&\quad21\cdot7\cdot160+21\cdot7\cdot35+21\cdot35\cdot21+21\cdot35\cdot49+21\cdot162\cdot160+21\cdot7\cdot49\\
&\quad+21\cdot21\cdot35+7\cdot35\cdot162+7\cdot35\cdot49+7\cdot162\cdot7+7\cdot162\cdot21+7\cdot162\cdot35\\
&\quad+7\cdot160\cdot49+7\cdot7\cdot49+7\cdot21\cdot49+35\cdot162\cdot160+35\cdot7\cdot21+35\cdot35\cdot49\\
&\quad+162\cdot160\cdot7+162\cdot160\cdot21+162\cdot160\cdot35+162\cdot160\cdot49+7\cdot35\cdot49+21\cdot35\cdot49\\
&=23\,520+5\,145+15\,435+36\,015+544\,320+7\,203+15\,435+39\,690\\
&\quad+12\,005+7\,938+23\,814+39\,690+54\,880+2\,401+7\,203+907\,200\\
&\quad+5\,145+60\,025+181\,440+544\,320+907\,200+1\,270\,080+12\,005+36\,015\\
&=4\,758\,124.
\end{aligned}
\]

For the edges inside individual classes, Claim~\ref{clm:tournaments}
gives
\begin{align*}
\sum_{i=0}^8|\mathcal T_{m_i}|
&=\frac1{24}\bigl[(21^3-21)+(7^3-7)+(35^3-35)+(162^3-4\cdot162)\\
&\qquad+(160^3-4\cdot160)+(7^3-7)+(21^3-21)+(35^3-35)+(49^3-49)\bigr]\\
&=385+14+1\,785+177\,120+170\,640+14+385+1\,785+4\,900\\
&=357\,028.
\end{align*}

\end{document}